\newcounter{hcomments}
\newcounter{gcomments}
\newcounter{scomments}
\theoremstyle{plain}
\newtheorem{THM}{Theorem}[section]
\newtheorem{thm}[THM]{Theorem}
\newtheorem{prop}[THM]{Proposition}
\newtheorem{lemma}[THM]{Lemma}
\newtheorem{cor}[THM]{Corollary}
\theoremstyle{definition}
\newtheorem{DEF}[THM]{Definition}
\newenvironment{customthm}[1]
  {\innercustomthm}
  {\endinnercustomthm}
\DeclareMathOperator{\id}{id}
\DeclareMathOperator{\Aut}{Aut}
\DeclareMathOperator{\Out}{Out}
\DeclareMathOperator{\Inn}{Inn}
\DeclareMathOperator{\diam}{diam}
\newcommand{\N}{\mathbb{N}}
\newcommand{\cF}{\mathcal{F}}
\newcommand{\cV}{\mathcal{V}}
\newcommand{\cU}{\mathcal{U}}
\def\G{{\Gamma}}
\newcommand{\arr}{\rightarrow}
\newcommand{\<}{\langle}
\renewcommand{\>}{\rangle}
\title[$\Aut(F_\infty)$ and $\Out(F_\infty)$ are coarsely bounded]{The Automorphism Group of the Infinite-Rank Free Group is Coarsely Bounded}
\author{George Domat, Hannah Hoganson, Sanghoon Kwak}
\date{}
\begin{document}

\maketitle 

\begin{abstract}
    We prove that the full automorphism group and the outer automorphism group of the free group of countably infinite rank are coarsely bounded. That is, these groups admit no continuous actions on a metric space with unbounded orbits and have the quasi-isometry type of a point.
\end{abstract}

\section{Introduction}

Classical geometric group theory concerns itself with finitely generated and then compactly generated groups.  In these settings, groups have a well-defined quasi-isometry type that allows one to study them from a geometric perspective. Recent work of Roe and Rosendal \cite{roe2003lectures,rosendal2022} has expanded the tools of coarse geometry to the broader class of non-compactly generated Polish groups. They introduce a generalization of compactness called \emph{coarse boundedness}. Groups that have a coarsely bounded generating set admit a well-defined quasi-isometry type. Globally coarsely bounded groups are quasi-isometric to a point, so fail to admit any interesting geometry. 

These advances have coincided with a recent burst of interest in \emph{big mapping class groups}, that is, mapping class groups of infinite-type surfaces. Unlike their finite-type analogues, these groups are uncountable, non-compactly generated, Polish groups. Mann-Rafi \cite{mann2022large} began the study of the coarse geometry of these groups using the framework of coarse boundedness. 

In the finite-type setting, the study of mapping class groups and the outer automorphism groups of free groups, $\Out(F_{n})$, are closely intertwined. Indeed, the fundamental group of a punctured surface is a free group and the mapping class group acts faithfully on it. This gives an injective map from the mapping class group to $\Out(F_{n})$ for an appropriate $n$. This begs two natural questions: What is the ``big" analogue of $\Out(F_{n})$ and how does it connect to the mapping class group?

The fundamental group of any infinite-type surface with infinite genus is the free group of countably infinite rank, $F_{\infty}$. So, a natural first guess is that $\Out(F_{\infty})$ serves as the ``big" analogue of $\Out(F_{n})$. Indeed, big mapping class groups do still have injective maps into $\Out(F_{\infty})$. In this paper, we prove the following.

\begin{customthm}{A}\label{THM:Main}
    The groups $\Aut(F_{\infty})$ and $\Out(F_{\infty})$ are both coarsely bounded.
\end{customthm}

This theorem states that each of these two groups fail to admit any continuous actions on a metric space with unbounded orbits, and in fact are both quasi-isometric to a point. This is in sharp contrast with the groups $\Aut(F_n)$ and $\Out(F_n)$, which are \emph{not} coarsely bounded for any $n\geq 2$. Given the theorem above one may be tempted to ask about $\Aut(F_{\kappa})$ for $\kappa$ an uncountable cardinal. We will not consider these groups because the natural topologies that we define in \Cref{ssec:topology} are not first countable and thus not metrizable. 

Mann--Rafi \cite{mann2022large} prove that a large class of infinite-type mapping class groups are in fact \emph{not} coarsely bounded. This suggests that the whole group $\Out(F_{\infty})$ is ``too big" to serve as a satisfying analogue of $\Out(F_{n})$ for these surfaces. Recent work of Algom-Kfir--Bestvina \cite{AB2021} proposes a different analogue via groups of proper homotopy equivalences of infinite-type graphs. In \cite{DHK2022} we prove that some of these new groups are also \emph{not} coarsely bounded. \Cref{THM:Main}, together with those results, gives further evidence that groups of proper homotopy equivalences are the ``correct" analogues of big mapping class groups.

\section{Preliminaries}

\subsection{The Countably Infinite Rank Free Group}

Let $F_{\infty}$ denote the free group of countably infinite rank. A presentation for $F_{\infty}$ is given by the generators $\{a_i\}_{i\in \N}$ with no relations.  We call the $\{a_i\}_{i\in \N}$ the \emph{standard basis} of $F_{\infty}$. 

\begin{DEF}
  A subgroup $G$ of $F_{\infty}$ is a \textbf{free factor} if there is another subgroup $P$ such that $F_{\infty}=G \ast P$.
\end{DEF}

We use $A_{n}$ to denote the subgroup generated by $\{a_i\}_{i=1}^n$. Each subgroup $A_n$ is a free factor of $F_{\infty}$, and we use $B_{n}$ to denote the complementary free factor. That is, $B_n=\<a_i\>_{i=n+1}^{\infty}$.  

We study the group of automorphisms, $\Aut(F_{\infty})$, of $F_{\infty}$ and the group of outer automorphisms, $\Out(F_{\infty})$. These groups fit into the following short exact sequence, where $\Inn(F_{\infty})$ is the group of automorphisms given by conjugation actions.

\begin{align*}
    1 \longrightarrow \Inn(F_{\infty}) \longrightarrow \Aut(F_{\infty}) \longrightarrow \Out(F_{\infty}) \longrightarrow 1.
\end{align*}

\subsection{Topologies on $\boldsymbol{\Aut}\mathbf{(F_{\boldsymbol{\infty}})}$}
\label{ssec:topology}

The action of $\Aut(F_{\infty})$ on $F_{\infty}$ allows us to define two natural topologies on $\Aut(F_{\infty})$. Equipping $F_{\infty}$ with the discrete topology, we can define the compact-open topology on $\Aut(F_{\infty})$ via the sub-base given by the sets of the form 
\begin{align*}
    \cV_{K,U} = \left\{f \in \Aut(F_{\infty})\big\vert f(K) \subseteq U\right\},
\end{align*}
where $K$ is any finite subset of $F_\infty$ and $U$ is any subset of $F_{\infty}$.

We can also consider the \emph{permutation topology} on $\Aut(F_{\infty})$ arising from this action. Basis elements for this topology are given by the sets of the form
\begin{align*}
    \cU_{K,g} = \left\{f \in \Aut(F_{\infty})\big\vert f\vert_{K} = g\vert_{K}\right\},
\end{align*}
where $K$ is a finite subset of $F_{\infty}$ and $g\in\Aut(F_{\infty})$. This topology is second-countable and supports a complete metric so that $\Aut(F_{\infty})$ has the structure of a Polish group.

\begin{prop}\label{prop:topologies}
The compact-open topology on $\Aut(F_{\infty})$ is equivalent to the permutation topology on $\Aut(F_{\infty})$.
\end{prop}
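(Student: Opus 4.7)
The plan is to show each topology refines the other by direct manipulation of (sub)basic open sets. The key preliminary observation is that since $F_{\infty}$ carries the discrete topology, the compact subsets of $F_{\infty}$ are precisely the finite subsets. So both topologies are parametrized by the same family of finite sets $K \subseteq F_{\infty}$, and the comparison becomes purely combinatorial.

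For one direction I would show the permutation topology is contained in the compact-open topology by expressing each basic open set $\cU_{K,g}$ as a finite intersection of compact-open subbasic sets. Specifically, for $K$ finite one has the equality
\begin{align*}
    \cU_{K,g} \;=\; \bigcap_{x \in K} \cV_{\{x\},\,\{g(x)\}},
\end{align*}
since membership in the right-hand side says $f(x)=g(x)$ for every $x \in K$, which is exactly $f|_K = g|_K$. As $K$ is finite, this intersection lies in the compact-open topology.

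For the reverse inclusion I would take a subbasic open set $\cV_{K,U}$ of the compact-open topology (with $K$ finite) and show it is open in the permutation topology by finding, for each $f \in \cV_{K,U}$, a permutation-basic neighborhood of $f$ contained in $\cV_{K,U}$. The natural candidate is $\cU_{K,f}$ itself: any $h \in \cU_{K,f}$ satisfies $h|_K = f|_K$, hence $h(K)=f(K) \subseteq U$, so $h \in \cV_{K,U}$. Thus $\cV_{K,U} = \bigcup_{f \in \cV_{K,U}} \cU_{K,f}$ is a union of permutation-basic open sets, which suffices since open sets are generated by subbasic ones.

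I do not anticipate any real obstacle. The only care needed is to remember to argue at the subbasis level for the compact-open topology and the basis level for the permutation topology; once one does so, both containments are one-line unfoldings of the definitions, hinging entirely on the fact that "compact in a discrete space" means "finite."
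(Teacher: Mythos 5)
Your proof is correct and follows essentially the same route as the paper: the paper likewise writes $\cU_{K,g}$ as the finite intersection $\bigcap_{x\in K}\cV_{\{x\},\{g(x)\}}$ for one containment, and for the other finds, around each point of a compact-open basic set, a permutation-basic neighborhood determined by that point's restriction to $K$. The only cosmetic difference is that the paper works with a general finite intersection $\bigcap_i \cV_{K_i,U_i}$ rather than reducing to a single subbasic set, which changes nothing of substance.
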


\begin{proof}
    Let $\cV = \cap_{i=1}^{n} \cV_{K_{i},U_{i}}$ be a basis element for the compact-open topology and let $g \in \cV$. Then the set of the form $\cU = \cU_{\cup_{i}K_{i},g}$ is a basis element for the permutation topology such that $g \in \cU \subset \cV$. Thus we see that the compact-open topology is coarser than the permutation topology.
    
    Now let $\cU = \cU_{K,g}$ be a basis element for the permutation topology and let $g' \in \cU_{K,g}$. The set of the form $\cV = \bigcap_{a_{i}\in K} \cV_{\{a_{i}\},\{g(a_{i})\}}$ is a basis element for the compact-open topology such that $g' \in \cV \subset \cU$ (in fact, $\cV=\cU$). We conclude that the permutation topology is also coarser than the compact-open topology and hence equivalent.
\end{proof}

Throughout the proofs below we will think of $\Aut(F_n)$ equipped with the permutation topology. Because $\Aut(F_{\infty})$ is a topological group we will only focus on neighborhood bases about the identity. Any such basis element $\cU_{K,\id}$ contains a basis element of the form $\cU_{n} := \cU_{\{a_{1},\ldots,a_{n}\},\id}$. Indeed, as the set $K$ is finite, every word within $K$ can be written using only finitely many of the standard basis elements of $F_{\infty}$. Thus $\cU_{n} \subset \cU_{K,\id}$ for sufficiently large $n$.

We further endow $\Out(F_{\infty})$ with the quotient topology. Since the kernel, $\Inn(F_{\infty})$, is a closed subgroup of $\Aut(F_{\infty})$ we have that $\Out(F_{\infty})$ is again a Polish group.

\subsection{Coarse Boundedness}

Instead of the formal definition of coarse boundedness for general coarse spaces, we only give the relevant equivalent definitions for Polish groups. We refer the reader to \cite[Chapter 2]{rosendal2022} for more details.

\begin{DEF}[{\cite[Proposition 2.15]{rosendal2022}}]
  \label{PROP:RosendalCB}
    Let $A$ be a subset of a Polish group $G$. Then we say that $A$ is \textbf{coarsely bounded (CB)} in $G$ if one of the following equivalent conditions is satisfied.
    \begin{enumerate}[(1)]
        \item (Rosendal's Criterion)
            For every neighborhood $\mathcal{U}$ of the identity in $G$, there is a finite subset $\mathcal{F}$ of $G$ and some $n \geq 1$ such that $A \subset (\mathcal{FU})^{n}$.
        \item
            For every continuous action of $G$ on a metric space $X$ and every $x \in X$, $\diam(A \cdot x) < \infty$.
    \end{enumerate}
  We say $G$ is \textbf{coarsely bounded (CB)} if $G$ is coarsely bounded in $G$ itself.
  \end{DEF}

\section{Proof of Main Theorem}

We first need a short lemma on free factors of free groups and reproduce the proof given in \cite[Lemma 7.2]{DHK2022}.

\begin{lemma} \label{LEM:subfreefact}
    Let $C$ be a free group and $A<B<C$ with $A$ a free factor of $C$. Then $A$ is also a free factor of $B$.  
\end{lemma}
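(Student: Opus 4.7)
The plan is to apply Bass--Serre theory (equivalently, the Kurosh subgroup theorem) to the subgroup $B$ inside the free product splitting coming from the assumption that $A$ is a free factor of $C$. Since $A$ is a free factor of $C$, choose a complementary free factor $P$ so that $C = A * P$, and let $T$ be the associated Bass--Serre tree. Then $T$ has trivial edge stabilizers, and its vertex stabilizers under the $C$-action are precisely the conjugates of $A$ and of $P$. In particular there is a distinguished vertex $v_A \in T$ whose $C$-stabilizer is exactly $A$.

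Restricting the $C$-action to $B$ gives an action of $B$ on $T$ which still has trivial edge stabilizers. By the structure theorem for groups acting on trees with trivial edge stabilizers (or directly by Kurosh), $B$ decomposes as a free product
\[
B \;=\; F \;*\; \bigl(*_\alpha H_\alpha\bigr),
\]
where $F$ is a free group (coming from the fundamental group of the quotient graph $B \backslash T$) and the $H_\alpha$ are the $B$-stabilizers of a set of orbit representatives for the $B$-action on the vertices of $T$. Each such $H_\alpha$ is of the form $B \cap gAg^{-1}$ or $B \cap hPh^{-1}$ for some $g, h \in C$.

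Now the hypothesis $A \leq B$ is used to identify $A$ with one of the factors $H_\alpha$: the $B$-stabilizer of the vertex $v_A$ is $B \cap A = A$, so the $B$-orbit of $v_A$ contributes exactly $A$ as a vertex group. Collecting all the remaining factors into a single free factor $Q$, we obtain $B = A * Q$, which is the desired conclusion. The main point requiring a moment of care is making sure that $A$ appears as its own factor rather than being merged or broken up, but this is automatic from the fact that the edge stabilizers of $T$ are trivial, so no amalgamation of vertex groups occurs in the splitting of $B$.
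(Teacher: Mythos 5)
Your argument is correct, but it takes a genuinely different route from the paper. You invoke the Kurosh subgroup theorem (via the Bass--Serre tree of the splitting $C = A * P$): restricting the action to $B$ yields a free product decomposition of $B$ whose vertex groups are the $B$-stabilizers of orbit representatives, and since $A \leq B$ the representative $v_A$ contributes $\mathrm{Stab}_B(v_A) = B \cap A = A$ as an intact free factor; the triviality of edge stabilizers is exactly what guarantees $A$ is neither amalgamated nor subdivided. The paper instead argues topologically with graphs: it realizes $A$ as a subgraph $\Delta$ of a graph $\Gamma$ with $\pi_1(\Gamma) \cong C$, lifts the inclusion $\Delta \hookrightarrow \Gamma$ to the cover $\Gamma_B \to \Gamma$ corresponding to $B$ (the lift exists because $A < B$), and observes that the embedded copy of $\Delta$ in $\Gamma_B$ realizes $A$ as a free factor of $B$. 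The two proofs are cousins --- covering spaces of graphs are the topological shadow of Bass--Serre theory --- but they emphasize different things. Your approach buys more information: it produces the entire Kurosh decomposition of $B$ relative to $A * P$ and locates $A$ precisely within it, and it is manifestly basis-free. The paper's approach is lighter on machinery (only covering space theory of graphs and the fact that a connected subgraph carries a free factor of the fundamental group) and is consistent with the graph-realization viewpoint used in the companion paper it cites. Both arguments work without modification when $A$, $P$, or $C$ have infinite rank, which is the case of interest here.
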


\begin{proof}
    First, $C$ contains $A$ as a free factor, so we can realize $A$ and $C$ as a pair of graphs $\Delta \subset \Gamma$, where $\pi_1(\G,p) \cong C$ for some $p \in \Delta$, and the isomorphism restricts to $\pi_1(\Delta,p) \cong A$.
    Consider the cover $\rho: (\G_B,\tilde{p}) \to (\G,p)$ corresponding to the subgroup $B$ for some $\tilde{p} \in \G_B$. Denoting by $i:\Delta \arr \G$ the inclusion map, we have $i_*(\pi_1(\Delta,p))=A<B=\rho_*(\pi_1(\G_B,\tilde{p}))$, so 
     the inclusion lifts to $\tilde{i}: (\Delta,p) \arr (\G_B,\tilde{p})$. As $\rho \circ \tilde{i}=i$ and $i$ is injective, $\tilde{i}$ is injective. Similarly, $\tilde{i}_*:\pi_1(\Delta,p) \to \pi_1(\G_B,\tilde{p})$ is injective so it follows that $\pi_1(\tilde{i}(\Delta),\tilde{p}) = \tilde{i}_*(\pi_1(\Delta,p)).$ Therefore, $\G_B$ contains $\tilde{i}(\Delta)$, a homeomorphic copy of $\Delta$, and the isomorphism $\rho_*: \pi_1(\G_B,\tilde{p}) \cong B$ restricts to the isomorphism $\rho_*: \pi_1(\tilde{i}(\Delta),\tilde{p})=\tilde{i}_*(\pi_1(\Delta,p)) \cong A$. Therefore, we conclude $A$ is a free factor of $B$.
\end{proof}

Next we show that any automorphism can be approximated on a finite rank free factor by a ``finitely supported" automorphism. Recall $A_n = \<a_i\>_{i=1}^n$ and $B_n = \<a_i\>_{i=n+1}^\infty$.

\begin{lemma} \label{lem:approximation}
For any $\phi\in \Aut(F_{\infty})$ and $n\in \N$, there exists $\psi\in \Aut(F_{\infty})$ and $m\geq n$ such that \begin{enumerate}
    \item $\psi\vert_{A_n}=\phi\vert_{A_n}$,
    \item $\psi\vert_{B_m}=\id$.
\end{enumerate} 
\end{lemma}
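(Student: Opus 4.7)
The plan is to build $\psi$ by specifying it on the standard basis $\{a_i\}_{i\in\N}$ in three pieces (indices $\leq n$, indices in the gap $(n,m]$, and indices $>m$) and to verify that these pieces assemble into an automorphism by showing the image is again a free basis of $F_\infty$.

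First I would choose $m$. Since $\phi(a_1),\ldots,\phi(a_n)$ are finitely many words in $F_\infty$, only finitely many of the standard basis elements appear in them; pick $m\geq n$ large enough that $\phi(A_n) = \<\phi(a_1),\ldots,\phi(a_n)\> \subset A_m$. Next I would extract a free-factor decomposition of $A_m$ adapted to $\phi(A_n)$. Because $A_n$ is a free factor of $F_\infty$ and $\phi$ is an automorphism, $\phi(A_n)$ is a free factor of $F_\infty$. Applying \Cref{LEM:subfreefact} with $A=\phi(A_n)$, $B=A_m$, and $C=F_\infty$, I obtain that $\phi(A_n)$ is a free factor of $A_m$, so there is a complement $Q$ with $A_m = \phi(A_n) \ast Q$. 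Comparing ranks, $Q$ is free of rank $m-n$; fix a free basis $q_{n+1},\ldots,q_m$ of $Q$.

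Now I would define $\psi$ on the standard basis by
\begin{align*}
    \psi(a_i) = \begin{cases} \phi(a_i) & 1 \leq i \leq n, \\ q_i & n < i \leq m, \\ a_i & i > m, \end{cases}
\end{align*}
and extend to a homomorphism $F_\infty\to F_\infty$. To verify that $\psi$ is an automorphism, I would observe that since $F_\infty = A_m \ast B_m = \phi(A_n) \ast Q \ast B_m$, the set
\[
  \{\phi(a_1),\ldots,\phi(a_n)\} \cup \{q_{n+1},\ldots,q_m\} \cup \{a_{m+1},a_{m+2},\ldots\}
\]
is a free basis of $F_\infty$, and $\psi$ sends the standard basis bijectively onto it. Hence $\psi \in \Aut(F_\infty)$, and by construction $\psi|_{A_n}=\phi|_{A_n}$ and $\psi|_{B_m}=\id$.

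The only nontrivial step is producing the complementary factor $Q\subset A_m$; this is exactly where \Cref{LEM:subfreefact} is essential, because the image $\phi(A_n)$ need not be spanned by any subset of the standard basis of $A_m$, so one cannot simply read off a complement. Everything else is bookkeeping: ensuring $m$ is chosen large enough, counting ranks so that $Q$ has rank $m-n$, and checking that gluing bases from the free-product pieces yields a basis of $F_\infty$.
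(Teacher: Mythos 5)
Your proposal is correct and follows essentially the same route as the paper: choose $m$ with $\phi(A_n)\subset A_m$, invoke \Cref{LEM:subfreefact} to split $A_m = \phi(A_n)\ast Q$, and send the standard basis to the assembled basis $\{\phi(a_1),\ldots,\phi(a_n)\}\cup\{q_{n+1},\ldots,q_m\}\cup\{a_{m+1},\ldots\}$. The only cosmetic difference is that the paper packages this as a change-of-basis automorphism of $A_m$ extended by the identity, whereas you define $\psi$ on all of the standard basis at once; these are the same map.
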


\begin{proof}
    Let $m$ be such that $\phi(A_{n}) \subset A_{m}$. Such an $m$ exists and is finite because $\phi(A_n)=\<\phi(a_i)\>_{i=1}^n$ and each $\phi(a_i)$ is a finite word.
    By \Cref{LEM:subfreefact}, $\phi(A_{n})$ is a free factor of $A_{m}$ and $m \ge n$. That is, $A_{m} = \phi(A_{n}) * C$ where $C$ is a free factor of rank $m-n$. Let $b_{1} = \phi(a_{1}),\ldots, b_{n} = \phi(a_{n})$ be a free basis for $\phi(A_{n})$ and let $c_{n+1},\ldots,c_{m}$ be some free basis of $C$. Define $\psi' : A_{m} \rightarrow A_{m}$ to be the change of basis automorphism that sends the standard basis, $\{a_{1},\ldots,a_{m}\}$, of $A_{m}$ to the new basis $\{b_{1},\ldots,b_{n},c_{n+1},\ldots,c_{m}\}$ of $A_{m}$. Extend $\psi'$ by the identity to obtain $\psi:F_\infty \to F_\infty$ as desired.
\end{proof}

\begin{thm}\label{thm:autCB}
The group $\Aut(F_{\infty})$ is coarsely bounded.
\end{thm}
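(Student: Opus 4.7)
The plan is to verify Rosendal's criterion (\Cref{PROP:RosendalCB}(1)). By the reduction in \Cref{ssec:topology}, it suffices to treat identity neighborhoods of the form $\cU_n := \cU_{\{a_1,\ldots,a_n\},\id}$; fixing such $n$, the goal is to produce a finite $\cF\subset\Aut(F_\infty)$ and $k\geq 1$ with $\Aut(F_\infty) \subset (\cF\cU_n)^k$. First I would apply \Cref{lem:approximation} to an arbitrary $\phi\in\Aut(F_\infty)$, factoring $\phi=\psi\cdot(\psi^{-1}\phi)$ with $\psi^{-1}\phi\in\cU_n$ and $\psi|_{B_m}=\id$ for some $m\geq n$. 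This reduces the problem to bounding such ``cofinitely trivial'' $\psi$ uniformly in $m$.

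The key construction is the basis-permutation involution $\tau_m\in\Aut(F_\infty)$ that swaps $a_i$ with $a_{m+i}$ for $i=1,\ldots,n$ and fixes every other generator; this is well-defined because $m\geq n$ ensures the two index blocks are disjoint. Since $\psi$ fixes $B_m$ pointwise, a one-line check on basis elements shows $\tau_m\psi\tau_m^{-1}\in\cU_n$, so $\psi=\tau_m^{-1}g\tau_m$ for some $g\in\cU_n$.

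The crux of the argument is then to realize every $\tau_m$ as a conjugate of the single fixed element $\tau_n$ by some $u\in\cU_n$. Specifically, $u$ should be a basis-permutation fixing $\{a_1,\ldots,a_n\}$ pointwise and sending $a_{n+i}\mapsto a_{m+i}$ for $i=1,\ldots,n$. Once the identity $u\tau_n u^{-1}=\tau_m$ is verified by checking each generator, substituting and collapsing adjacent $\cU_n$-factors places $\psi$ into $\cU_n\,\tau_n\,\cU_n\,\tau_n\,\cU_n\subset(\cF\cU_n)^3$ with $\cF=\{e,\tau_n\}$, and hence $\phi\in(\cF\cU_n)^3$. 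The one technical subtlety I anticipate is in constructing $u$ when $n<m<2n$, where the index blocks $\{n+1,\ldots,2n\}$ and $\{m+1,\ldots,m+n\}$ overlap; in that range $u$ cannot be a product of disjoint transpositions and must instead cycle through the overlap, but this should reduce to a routine case analysis.
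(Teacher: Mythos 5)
Your proposal is correct and follows essentially the same route as the paper: approximate $\phi$ by a cofinitely supported $\psi$ via \Cref{lem:approximation}, then conjugate $\psi$ into $\cU_n$ using one fixed basis-swapping involution together with elements of $\cU_n$, landing in $(\cF\cU_n)^3$ with $\cF=\{\id,\tau_n\}$ exactly as the paper does with its $f$ and $g$. The overlap subtlety you flag for $n<m<2n$ is genuine but is sidestepped in the paper by simply replacing $m$ with $\max\{m,2n\}$, which is harmless since $\psi$ remains the identity on $B_{m'}$ for every $m'\geq m$.
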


\begin{proof}
    Let $\cU$ be a neighborhood of the identity in $\Aut(F_{\infty})$. As discussed in \Cref{ssec:topology}, we can find some basis element of the form $\cU_{n} \subset \cU$. Define
    \begin{align*}
        f: \begin{cases} a_{i} \leftrightarrow a_{n+i} \quad &\text{if $1 \leq i \leq n$}, \\
        a_{i} \mapsto a_{i}  &\text{if $i>2n$}.
        \end{cases}
    \end{align*}
    We will show that $\Aut(F_{\infty}) = (\cF\cU_{n})^{3}$ where $\cF = \{\id,f\}$.

    Let $\phi \in \Aut(F_{\infty})$. Apply \Cref{lem:approximation} to $\phi^{-1}$ to obtain $\psi \in \Aut(F_\infty)$ and $m\geq n$ such that $\psi\phi \in \cU_{n}$ and $\psi\vert_{B_{m}} = \id\vert_{B_m}$. Let $m' = \max\{m,2n\}$ and define
    \begin{align*}
        g: \begin{cases} a_{i} \leftrightarrow a_{i-n+m'} \quad &\text{if $n+1 \leq i \leq 2n$},\\
        a_{i} \mapsto a_{i} &\text{otherwise}.
        \end{cases}
    \end{align*}
    Our choice of $m'$ ensures that $g \in \cU_{n}$. We can now check that $fg\psi g f \in \cU_{n}$. Indeed, for any $i = 1,\ldots,n$ we have
    \begin{align*}
        fg\psi gf (a_{i}) = fg \psi (a_{m'+i}) = fg (a_{m'+i}) = a_{i}.
    \end{align*}
    Since $f^2=g^2=\id$, it follows that $\psi \in gf\cU_n fg$, so $\psi^{-1} \in gf\cU_n fg \subset \cU_n f \cU_n f \cU_n \subset (\cF \cU_n)^3$. Therefore, $\phi \in \psi^{-1}\cU_n \subset (\cF \cU_n)^3 \cU_n = (\cF \cU_n)^3$, concluding the proof.
\end{proof}

\begin{cor}
The group $\Out(F_{\infty})$ is coarsely bounded.
\end{cor}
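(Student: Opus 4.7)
The plan is to deduce coarse boundedness of $\Out(F_\infty)$ directly from \Cref{thm:autCB} by pushing Rosendal's Criterion forward along the quotient homomorphism $\pi \colon \Aut(F_\infty) \to \Out(F_\infty)$. The general principle is that coarse boundedness passes through continuous surjective homomorphisms of Polish groups, which is immediate from part (1) of \Cref{PROP:RosendalCB}; I would state and verify this principle in line rather than invoking it as a black box.

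More concretely, I would start with an arbitrary neighborhood $\cU'$ of the identity in $\Out(F_\infty)$. By definition of the quotient topology, $\cU \defeq \pi^{-1}(\cU')$ is a neighborhood of the identity in $\Aut(F_\infty)$. Applying \Cref{thm:autCB} via Rosendal's Criterion, there is a finite subset $\cF \subset \Aut(F_\infty)$ and an integer $n \geq 1$ (in fact we already know from the proof of \Cref{thm:autCB} that we can take $n=3$ and $|\cF|=2$) such that $\Aut(F_\infty) = (\cF\cU)^n$.

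Now I would apply the surjective homomorphism $\pi$ to both sides. Since $\pi$ is a group homomorphism and $\pi(\cU) = \pi(\pi^{-1}(\cU')) \subset \cU'$ (using surjectivity of $\pi$ for the equality $\pi(\pi^{-1}(\cU'))=\cU'$), we obtain
\[
    \Out(F_\infty) = \pi(\Aut(F_\infty)) = \pi((\cF\cU)^n) = (\pi(\cF)\pi(\cU))^n \subset (\pi(\cF)\cU')^n.
\]
The set $\pi(\cF)$ is a finite subset of $\Out(F_\infty)$, so Rosendal's Criterion is satisfied for $\Out(F_\infty)$ and we conclude that it is coarsely bounded.

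I do not anticipate any real obstacle here; the only small subtlety is making sure to use the defining property of the quotient topology (so that $\pi^{-1}(\cU')$ really is open in $\Aut(F_\infty)$) and the surjectivity of $\pi$ (so that finite covers of the source produce finite covers of the target). Since $\Inn(F_\infty)$ is a closed normal subgroup of the Polish group $\Aut(F_\infty)$, both ingredients are already in place from \Cref{ssec:topology}.
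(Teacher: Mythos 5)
Your proof is correct, but it verifies coarse boundedness through a different equivalent condition than the paper does. You push Rosendal's Criterion (condition (1) of \Cref{PROP:RosendalCB}) forward along the quotient map $\pi$: a covering $\Aut(F_\infty)=(\cF\cU)^n$ with $\cU=\pi^{-1}(\cU')$ maps to a covering $\Out(F_\infty)=(\pi(\cF)\pi(\cU))^n\subset(\pi(\cF)\cU')^n$, using that $\pi$ is a continuous surjective homomorphism. The paper instead uses condition (2): it pulls back an arbitrary continuous action of $\Out(F_\infty)$ on a metric space to an action of $\Aut(F_\infty)$ via pre-composition with $\pi$, and notes that surjectivity forces the $\Out(F_\infty)$-orbit to coincide with the $\Aut(F_\infty)$-orbit, which is bounded by \Cref{thm:autCB}. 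Both arguments are instances of the same general principle --- coarse boundedness passes through continuous surjective homomorphisms --- and both are equally short; yours has the mild advantage of staying entirely within the combinatorial criterion actually used in the proof of \Cref{thm:autCB} (and even inherits the explicit constants $n=3$, $|\cF|=2$), while the paper's version avoids any set-algebra with images and preimages. Your parenthetical remarks about the quotient topology (so that $\pi^{-1}(\cU')$ is open) and surjectivity are exactly the right points to flag; the only step worth writing out explicitly is the identity $\pi(AB)=\pi(A)\pi(B)$ for a homomorphism, which justifies $\pi((\cF\cU)^n)=(\pi(\cF)\pi(\cU))^n$.
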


\begin{proof}
    Any continuous action of $\Out(F_{\infty})$ on a metric space gives rise to an action of $\Aut(F_{\infty})$ via pre-composing with the quotient map. Since $\Aut(F_{\infty})$ surjects onto $\Out(F_{\infty})$, we have that the orbit of $\Out(F_{\infty})$ must have finite diameter, otherwise it would contradict \Cref{thm:autCB}. 
\end{proof}

\section*{Acknowledgements}
  We would like to thank Mladen Bestvina for comments on an earlier version of this paper. We also thank the referee for several helpful comments.
The authors acknowledge support from NSF grants DMS--1906095 (Hoganson), DMS--1905720 (Domat, Kwak), RTG DMS--1840190 (Domat, Hoganson), and CAREER DMS--2046889 (Kwak).
\bibliography{bib}
\bibliographystyle{plain}

\end{document}